\title[A nonsmooth path-connectedness relation in the real plane]{A nonsmooth path-connectedness relation in the real plane}
\author{YUSUF UYAR}
\address{Department of Mathematics, Middle East Technical University, 06800, Ankara, Turkey.}
\email{yuyar@metu.edu.tr}
\subjclass[2020]{Primary 03E15; Secondary 54H05.}
\keywords{Borel equivalence relations, path-connected, smooth}
\newtheorem{theorem}{Theorem}
\newtheorem*{question}{Question}
\newtheorem*{theoremx}{Main Theorem}
\newcommand{\C}{\mathcal{C}}
\newcommand{\e}{E^*_0}
\newcommand{\cantor}{2^{\mathbb{N}}}
\newcommand{\N}{\mathbb{N}}
\newcommand{\R}{\mathbb{R}}
\newcommand{\balpha}{\boldsymbol{\alpha}}
\newcommand{\bbeta}{\boldsymbol{\beta}}
\begin{document}

\setlength{\abovedisplayskip}{0pt}
\setlength{\belowdisplayskip}{0pt}
\setlength{\abovedisplayshortskip}{0pt}
\setlength{\belowdisplayshortskip}{0pt}

\sloppy

\begin{abstract}
In this paper, we show that path-connectedness equivalence relation of the Knaster continuum which is a compact subset of the real plane is Borel bireducible to a nonsmooth hyperfinite Borel equivalence relation. This answers a question of \cite{bec98}.
\end{abstract}
\maketitle
\section{Introduction}
Let $X$ be a Polish space, i.e. a completely metrizable separable topological space. Consider the path-connectedness equivalence relation $\approx _X$ on $X$ given by $x\approx _Xy$ iff there exists a continuous function $\gamma$ from the unit interval $\mathbb{I}$ into $ X$ with $\gamma(0)=x\text{ and }\gamma(1)=y$
for any $x,y\in X$. Then the equivalence relation $\approx _X$ is an analytic subset of $X \times X$ as it is the projection of the closed subset 
$$\{ (x,y,\gamma)\in X\times X\times \Pi (X):\gamma (0)=x\land \gamma (1)=y\}$$ where $\Pi (X)$ is the Polish space of all continuous functions from $\mathbb{I}$ to $X$.

This paper is a contribution to the study of Borel complexity of path-connectedness relations of various subsets of $\mathbb{R}^n$. While there have been some results on this theme, several questions seem to not have gotten enough attention. In particular, regarding the descriptive complexity of the equivalence relation of path-connectedness on compact subsets of $\R ^n$,  Becker \cite{bec98} asked the following two questions:
\begin{itemize}
    \item Is $\approx _K$ Borel for any compact subset $K$ of the real plane?
\item Is there a compact subset $K \subseteq \mathbb{R}^2$ such that $\approx _K$ is not smooth?
\end{itemize}

In regard to the first question, an example of a compact subset $K\subseteq \R^3$ for which $\approx _K$ is non-Borel had been given in \cite{kun82} before Becker's question. It was then shown in \cite{bec01} that if $X\subseteq \R^2$ is a $G_{\delta}$ subset, then $\approx _X$ being Borel is equivalent to each path-component of $X$ being Borel. Recently, a more powerful result that $\approx _X$ is Borel for any $G_{\delta}$ subset $X \subseteq \mathbb{R}^2$ is proven in \cite{debs24}, answering the first question positively. 

In this paper, we show that the Knaster continuum which was constructed by Bronisław Knaster in 1922 actually answers the second question of Becker affirmetively (see \cite{knaster}). More precisely, we prove the following.

\begin{theoremx}
\label{main}
   There exists a compact subset $H\subseteq \mathbb{R}^2$ for which $\approx _H$ is Borel bireducible to a nonsmooth hyperfinite Borel equivalence relation.
\end{theoremx}
In Section 2, we shall provide some necessary background on theory of Borel equivalence relations and relevant notation for the construction. In Section 3, we prove the main theorem and conclude with a next naturally arising question.\\

\textbf{Acknowledgements.} This paper is a part of the author's master's thesis \cite{uyar} which was partially supported by TÜBİTAK BİDEB (2210/E Program) and written under the supervision of Burak Kaya, to whom the author would like to extend his sincere thanks for his helpful advice and guidance on author's research work. The author would like to thank Benjamin Vejnar for letting him know that the construction done here is a well-known example that had been done long before. 

\section{Preliminaries}
\subsection{Background on Borel equivalence relations}\label{subsec} Let $X$ and $Y$ be Polish spaces and let $E \subseteq X \times X$ and $F \subseteq Y \times Y$ be Borel equivalence relations. A Borel map $f:X\rightarrow Y$ is called a \textit{Borel reduction} from $E$ to $F$ if
$$xEy \longleftrightarrow f(x)Ff(y)$$
for any $x,y\in X$. In the case that there is a Borel reduction from $E$ to $F$, we say that $E$ is \textit{Borel reducible} to $F$, in which case we write  $E\leq _B F$. We say that $E$ and $F$ are Borel bireducible if $E\leq _B F$ and $F \leq _B E$, in which case we write $E \sim_B F$. The notion of Borel reducibility is a tool to compare relative complexities of equivalence relations on Polish spaces.

A Borel equivalence relation $E$ on a Polish space $X$ is said to be \textit{smooth} if we have $E \leq_B \Delta_Y$ for some Polish space $Y$, where $\Delta_Y$ is the identity relation on $Y$. An important example of a nonsmooth Borel equivalence relation is the eventual equality relation $E_0$ on the Cantor space $\cantor$ given by  $$\alpha E_0\beta\longleftrightarrow \exists m\: \forall n\geq m\:  (\alpha _n=\beta _n)$$ for any $\alpha , \beta \in \cantor$, for example, see \cite[Section 6.1]{gao}. Indeed, $E_0$ is the least complex nonsmooth Borel equivalence relation with respect to Borel reducibility in the sense that it is the $\leq_B$-successor of $\Delta_{\mathbb{R}}$ \cite{HKL90}.

A Borel equivalence relation $E$ is said to be \textit{hyperfinite} if $E=\bigcup_n R_n$ for some increasing sequence $R_0 \subseteq R_1 \subseteq \dots$ of Borel equivalence relations with finite equivalence classes. For example, $E_0$ is the increasing union of Borel equivalence relations $F_n$ with finite classes given by
\[ \alpha\ F_n\ \beta\ \longleftrightarrow\ \alpha_i=\beta_i \text{ for all } i \geq n\ \] and so it is hyperfinite. It was proven in \cite{djk} that any two nonsmooth hyperfinite Borel equivalence relations on Polish spaces are Borel bireducible. Consequently, up to Borel bireducibility, $E_0$ is the unique nonsmooth hyperfinite equivalence relation.\\ 

For each $\alpha \in \cantor$, let $\overline{\alpha}$ denote the sequence obtained by taking the complement of each entry, that is, $\overline{\alpha}_n=1-\alpha_n$. Consider the equivalence relation $\e$ on $\cantor $ given by $$\alpha \e \beta \longleftrightarrow(\alpha E_0 \beta\ \text{ or }\ \alpha E_0 \overline{\beta})$$ for any $\alpha ,\beta \in \cantor$. It is straightforward to check that $\e$ is a hyperfinite Borel equivalence relation. Moreover, the continuous map $f:\cantor \rightarrow \cantor$ defined as $f(\alpha)=(\alpha _0,1,\alpha_1,1,\alpha_2,1,...)$ for all $\alpha \in \cantor$ is a Borel reduction from $E_0$ into $\e$, which shows that $\e$ is nonsmooth. Consequently, $\e \sim_B E_0$.

\subsection{Notations} Given a Polish space $X$, we shall denote the set of all non-empty compact subsets of $X$ by $\mathcal{K}(X)$, which itself becomes a Polish space when it is endowed with the Vietoris topology.

In the rest of the paper, let $\C$ denote the Cantor set $$\C:=\left\{ \displaystyle \sum _{i=0}^{\infty } \frac{2\alpha _i}{3^i}\in [0,1]: \alpha _i\in \{0,1\} \text{ for all }i\in \N\right\}.$$ 
Recall that $\C$ is homeomorphic to the Cantor space $\cantor$ via the map
\[ \alpha \mapsto \displaystyle \sum _{i=0}^{\infty } \frac{2\alpha _i}{3^{i+1}}\ .\]
For any $\alpha \in \cantor$, we shall denote its image under this homeomorphism by the boldface letter $\mathbf{\balpha}$. We set $N_0=\cantor$ and for any $k\in \mathbb{N}^+$, we define $N_k\subseteq \cantor $ as $$N_k=\left\{ \alpha\in \cantor  :\min \{i\in \N:\alpha _i=1\}=k-1\right\} \ .$$
For any sequence $\alpha \in N_k$ where $k>0$,  let $\widehat{\alpha}$ be the sequence $(0,0,...,0,1,\overline{\alpha}_{k+1},\overline{\alpha}_{k+2},\overline{\alpha}_{k+3},...)$, which is still in $N_k$. Lastly, $\overline{\balpha}$ and $\widehat{\balpha}$ will be the real numbers corresponding to the sequences  $\overline{\alpha}$ and $\widehat{\alpha}$ respectively.

\section{Proof of the Main Theorem}

We shall first define a sequence $(H_k)_{k}$ of compact subsets of $\mathbb{R}^2$ as follows. Let $x_0=\frac{1}{2}$. For any $\alpha \in N_0 $, let $\gamma ^0_{\alpha,\overline{\alpha}}$ be the upper semicircle centered at $(x_0,0)$ with radius $|x_0-\balpha|$ connecting $(\balpha,0)$ to $(\overline{\balpha},0)$. Set
\[H_0=\displaystyle \bigcup _{\alpha \in N_0 }\gamma ^0_{\alpha ,\overline{\alpha}}\]
For each $k>0$, let $x_k=\frac{\balpha +\widehat{\balpha}}{2}$ where $\alpha$ is some (equivalently, any) element of $N_k$. Then for each $\alpha \in N_k$, let $\gamma ^k_{\alpha ,\widehat{\alpha}}$ be the lower semicircle centered at $(x_k,0)$ with radius $|x_k-\balpha|$ connecting $(\balpha  ,0)$ to $(\widehat{\balpha},0)$. Then set \[H_k=\displaystyle \bigcup _{\alpha \in N_k}\gamma ^k_{\alpha ,\widehat{{\alpha}}}\]
Set $H:=\bigcup_kH_k$. Indeed, the compact set $H$ is shown in Figure $\ref{figure}$.

For an example, let $\alpha =(0,1,1,0,...)$ and $\beta =(0,0,0,1,...)$ where $\alpha F_4\overline{\beta }$ and $F_4$ is defined as in the Subsection \ref{subsec} so that $\alpha \e \beta$. Then the diagram
$$\alpha =(0,1,1,0,...) \overset{\gamma ^0_{\alpha,\overline{\alpha}}}{\longleftrightarrow}(1,0,0,1,...) \overset{\gamma ^1_{\overline{\alpha}, \widehat{\overline{\alpha}}}}{\longleftrightarrow}(1,1,1,0,...) \overset{\gamma ^0_{\widehat{\overline{\alpha}},\overline{\widehat{\overline{\alpha}}}}}{\longleftrightarrow}(0,0,0,1,...)=\overline{\widehat{\overline{\alpha}}}=\beta $$
shows the path connecting $(\balpha ,0)$ to $(\bbeta ,0)$, which is $\gamma ^0_{\alpha,\overline{\alpha}}\cup \gamma ^1_{\overline{\alpha}, \widehat{\overline{\alpha}}}\cup \gamma ^0_{\widehat{\overline{\alpha}},\overline{\widehat{\overline{\alpha}}}}$.
 \vspace*{-3cm}
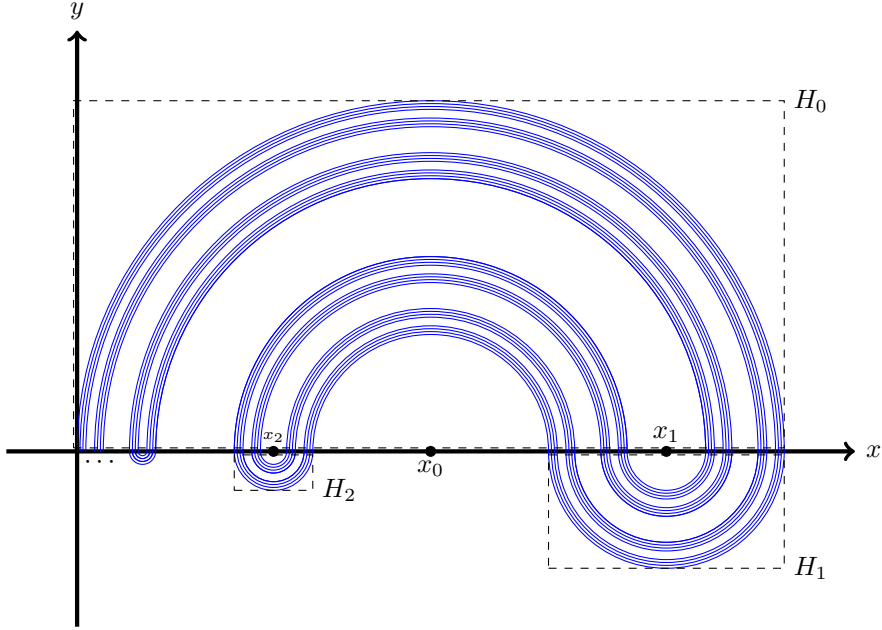
\begin{figure}[H]
\begin{tikzpicture}[scale=9.3]
\centering

\draw[->,ultra thick] (-0.1,0)--(1.1,0) node[right]{$x$};
\draw[->,ultra thick] (0,-0.25)--(0,0.6) node[above]{$y$};

\begin{scope}
\clip (0,0) rectangle (1,1);
 
\draw[blue] (0.5,0) circle(0.5-0/3);
\draw[blue] (0.5,0) circle(0.5-1/3);

\draw[blue] (0.5,0) circle(0.5-1/9);
\draw[blue] (0.5,0) circle(0.5-2/9);
\draw[blue] (0.5,0) circle(0.5-7/9);
\draw[blue] (0.5,0) circle(0.5-8/9);

\draw[blue] (0.5,0) circle(0.5-1/27);
\draw[blue] (0.5,0) circle(0.5-2/27);
\draw[blue] (0.5,0) circle(0.5-7/27);
\draw[blue] (0.5,0) circle(0.5-8/27);
   
\draw[blue] (0.5,0) circle(0.5-1/81);
\draw[blue] (0.5,0) circle(0.5-2/81);
\draw[blue] (0.5,0) circle(0.5-7/81);
\draw[blue] (0.5,0) circle(0.5-8/81);
\draw[blue] (0.5,0) circle(0.5-19/81);
\draw[blue] (0.5,0) circle(0.5-20/81);
\draw[blue] (0.5,0) circle(0.5-25/81);
\draw[blue] (0.5,0) circle(0.5-26/81);    

\draw[blue] (0.5,0) circle(0.5-1/243);
\draw[blue] (0.5,0) circle(0.5-2/243);
\draw[blue] (0.5,0) circle(0.5-7/243);
\draw[blue] (0.5,0) circle(0.5-8/243);
\draw[blue] (0.5,0) circle(0.5-19/243);
\draw[blue] (0.5,0) circle(0.5-20/243);
\draw[blue] (0.5,0) circle(0.5-25/243);
\draw[blue] (0.5,0) circle(0.5-26/243); 
\draw[blue] (0.5,0) circle(0.5-55/243); 
\draw[blue] (0.5,0) circle(0.5-56/243);
\draw[blue] (0.5,0) circle(0.5-61/243);
\draw[blue] (0.5,0) circle(0.5-62/243);
\draw[blue] (0.5,0) circle(0.5-73/243);
\draw[blue] (0.5,0) circle(0.5-74/243);
\draw[blue] (0.5,0) circle(0.5-79/243);
\draw[blue] (0.5,0) circle(0.5-80/243); 
\end{scope}

\begin{scope}[shift={(0.66666,0)},scale=1/3]
\clip (0,0) rectangle (1,-1);
 
\draw[blue] (0.5,0) circle(0.5-0/3);
\draw[blue] (0.5,0) circle(0.5-1/3);
\draw[blue] (0.5,0) circle(0.5-1/9);
\draw[blue] (0.5,0) circle(0.5-2/9);
\draw[blue] (0.5,0) circle(0.5-7/9);
\draw[blue] (0.5,0) circle(0.5-8/9);
\draw[blue] (0.5,0) circle(0.5-1/27);
\draw[blue] (0.5,0) circle(0.5-2/27);
\draw[blue] (0.5,0) circle(0.5-7/27);
\draw[blue] (0.5,0) circle(0.5-8/27);
\draw[blue] (0.5,0) circle(0.5-1/81);
\draw[blue] (0.5,0) circle(0.5-2/81);
\draw[blue] (0.5,0) circle(0.5-7/81);
\draw[blue] (0.5,0) circle(0.5-8/81);
\draw[blue] (0.5,0) circle(0.5-19/81);
\draw[blue] (0.5,0) circle(0.5-20/81);
\draw[blue] (0.5,0) circle(0.5-25/81);
\draw[blue] (0.5,0) circle(0.5-26/81); 
\end{scope}

\begin{scope}[shift={(0.222222,0)},scale=1/9]
\clip (0,0) rectangle (1,-1);
 
\draw[blue] (0.5,0) circle(0.5-0/3);
\draw[blue] (0.5,0) circle(0.5-1/3);
\draw[blue] (0.5,0) circle(0.5-1/9);
\draw[blue] (0.5,0) circle(0.5-2/9);
\draw[blue] (0.5,0) circle(0.5-7/9);
\draw[blue] (0.5,0) circle(0.5-8/9);
\draw[blue] (0.5,0) circle(0.5-1/27);
\draw[blue] (0.5,0) circle(0.5-2/27);
\draw[blue] (0.5,0) circle(0.5-7/27);
\draw[blue] (0.5,0) circle(0.5-8/27);
  \end{scope}

\begin{scope}[shift={(0.074,0)},scale=1/27]
\clip (0,0) rectangle (1,-1);
 
\draw[blue] (0.5,0) circle(0.5-0/3);
\draw[blue] (0.5,0) circle(0.5-1/3);
\draw[blue] (0.5,0) circle(0.5-1/9);
\draw[blue] (0.5,0) circle(0.5-2/9);
\draw[blue] (0.5,0) circle(0.5-7/9);
\draw[blue] (0.5,0) circle(0.5-8/9);
\end{scope}

\draw[dashed] (-0.005,0.005) rectangle (1,0.5) node[anchor=west]{$H_0$};
\draw[dashed] (0.66666,-0.005) rectangle (1,-0.3333*0.5) node[anchor=west]{$H_1$};
\draw[dashed] (2/9,-0.005) rectangle (0.3333,-0.3333*0.3333*0.5) node[anchor=west]{$H_2$};
\draw  node at (0.035,-0.015) {$\cdots$} ;
\filldraw (0.5,0) circle(0.2pt) node[anchor=north]{$x_0$};
\filldraw (5/6,0) circle(0.2pt) node[anchor=south]{$x_1$};
\filldraw (5/18,0) circle(0.2pt) node[anchor=south]{\tiny $x_2$};

\end{tikzpicture}
\caption{The Knaster continuum}
\label{figure}
\end{figure}
\newpage
Since $\e$ is the unique hyperfinite nonsmooth Borel equivalence relation up to Borel bireducibility, the Main Theorem is an immediate consequence of the following.\\

\begin{theorem}
    $\approx _H$ is Borel bireducible to $E^*_0$.
\end{theorem}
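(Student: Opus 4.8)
The plan is to prove the theorem in two directions. First, the easy direction: since $\e$ is (as shown) a nonsmooth hyperfinite Borel equivalence relation and, by \cite{djk}, all such relations are Borel bireducible, it suffices to exhibit a Borel bireduction between $\approx_H$ and $\e$. For $\e \leq_B \approx_H$, I would define the map $\alpha \mapsto (\balpha, 0)$ sending a sequence in $\cantor$ to the corresponding point on the $x$-axis inside the Cantor set $\C \times \{0\}$, and argue that $\alpha \e \beta$ if and only if $(\balpha,0) \approx_H (\bbeta,0)$. The forward implication is essentially the content of the worked example in the excerpt: the semicircles $\gamma^0$ identify each $\balpha$ with $\overline{\balpha}$ (the top-level arcs), and the semicircles $\gamma^k$ identify each point with its $\widehat{\ }$-partner (the lower arcs), and chaining these arcs realizes the generators of $\e$ as concatenated paths.

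\noindent The main conceptual work, and the step I expect to be the hardest, is the reverse implication together with the direction $\approx_H \leq_B \e$: I must show that the path-components of $H$ are \emph{exactly} the $\e$-classes of endpoints, i.e. that no path in $H$ can connect two points lying in distinct $\e$-classes, and that every point of $H$ is path-connected to an endpoint on the $x$-axis. First I would establish that each arc $\gamma^k_{\alpha,\cdot}$ is an honest embedded semicircle whose only points on the $x$-axis are its two endpoints, so the only places where path-components can merge are at these Cantor-set endpoints. Then the key combinatorial lemma is that the endpoint-identifications generated by the two families of arcs (complementation $\alpha \mapsto \overline{\alpha}$ at level $0$ and the $\widehat{\ }$ operation at levels $k>0$) generate precisely the relation $\e$ on $\cantor$; this is where one verifies that composing an even or odd number of these involutions always lands inside an $E_0$-class of $\alpha$ or of $\overline{\alpha}$, and never escapes it. I would prove this by analyzing how each operation acts on the tail of a sequence: complementation flips all coordinates, while $\widehat{\ }$ fixes the leading block up to the first $1$ and flips the tail beyond it, so any finite composition changes only finitely many coordinates relative to either $\alpha$ or $\overline{\alpha}$.

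\noindent For the nontrivial topological content—that a path in $H$ cannot connect points in different path-components of distinct $\e$-classes—I would use the structure of $H$ as a nested union of scaled copies (visible in Figure \ref{figure}): the components of $H_k$ for large $k$ are arcs of vanishingly small diameter clustering near the accumulation points of $\C$, so any path must stay within a single ``tower'' of nested arcs, and continuity forces the endpoint labels to agree eventually. Concretely, I would show that for a continuous $\gamma:\mathbb{I}\to H$, the composition of $\gamma$ with the projection to the $x$-axis has image in $\C$ on the (closed, nowhere dense) preimage of the $x$-axis, and that crossing from one arc to another is only possible at a shared endpoint; a compactness and connectedness argument then bounds how the sequence-label can change along $\gamma$, yielding $\gamma(0) \e \gamma(1)$.

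\noindent Finally, for $\approx_H \leq_B \e$ I would define a Borel map $r:H \to \cantor$ retracting each point of $H$ to (a sequence coding) the endpoint of its arc, e.g. by sending an interior point of $\gamma^k_{\alpha,\widehat\alpha}$ to $\alpha$; since every point of $H$ lies on exactly one arc (or is itself an endpoint in $\C\times\{0\}$), this map is well-defined and Borel, and by the two previous steps it satisfies $x \approx_H y \Leftrightarrow r(x)\,\e\,r(y)$. Combining both reductions gives $\approx_H \sim_B \e \sim_B E_0$, as desired. The routine verifications—Borelness of the maps, that the Vietoris-constructed $H$ is genuinely compact, and the explicit arc parametrizations—I would relegate to short remarks, concentrating the argument on the combinatorial identification of path-components with $\e$-classes.
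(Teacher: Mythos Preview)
Your overall strategy matches the paper's: use $\psi(\alpha)=(\balpha,0)$ for $\e \leq_B {\approx_H}$ and an endpoint-retraction map for the converse, with the core content being that path-components of $H$ coincide with $\e$-classes of endpoints. The forward direction (chaining arcs) and your combinatorial lemma (finite compositions of $\alpha\mapsto\overline{\alpha}$ and $\alpha\mapsto\widehat{\alpha}$ stay within an $\e$-class) are correct and appear, the latter implicitly, in the paper.

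The genuine gap is in your argument that a path cannot connect $\e$-inequivalent endpoints. Your sketch---``crossing from one arc to another is only possible at a shared endpoint; a compactness and connectedness argument then bounds how the sequence-label can change''---does not control the situation, because a path in $H$ may traverse \emph{infinitely} many arcs (every nontrivial path-component is a ray doing exactly this), and the arc-labels along such a path need not stabilize. The assertion that ``any path must stay within a single tower of nested arcs'' is either false or presupposes the conclusion. A concrete obstruction is required, and the paper supplies one you do not mention: first, each finite union $H_0\cup\cdots\cup H_i$ is homeomorphic to $\C\times\mathbb{I}$, so $\e$-inequivalent endpoints are still path-disconnected there, forcing any path between them to meet infinitely many $H_k$; second, points in distinct $H_k$ (for $k\geq 1$) cannot be joined inside $H\setminus M$, where $M=\{x=\tfrac12\}$. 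Choosing parameters $t_n$ with $\gamma(t_n)\in H_{k_n}$ for distinct $k_n$ (so $\gamma(t_n)\to(0,0)$ since $\operatorname{diam}H_k\to 0$) and intermediate $\hat t_n\in(t_n,t_{n+1})$ with $\gamma(\hat t_n)\in M$, continuity gives $\gamma(\hat t_n)\to(0,0)\notin M$, a contradiction. This separating-line trick is the missing ingredient; without it your ``compactness and connectedness'' gesture does not close the argument.
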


\begin{proof} To show that $E^*_0 \leq _B\;\approx _H$, let $\psi :\cantor \rightarrow H$ be defined as $\psi (\alpha )=(\balpha,0)$, which is clearly continuous. We claim that $\alpha \e \beta$ if and only if $\psi (\alpha )\approx _H\psi (\beta)$.\\

Clearly, $\alpha \e\beta$ if and only if $\alpha F_n\beta$ or $\alpha F_n\overline{\beta}$ for some $n\in \mathbb{N}$.  We claim then that for any $n \in \mathbb{N}$ and for any $\alpha ,\beta\in \cantor $ with $\alpha F_n\beta$ or $\alpha F_n\overline{\beta}$, there is a path in $H$ connecting $\psi (\alpha )$ and $\psi (\beta )$. We shall prove this by induction on $n$.

For the base step $n=0$, the claim trivially holds whenever $\alpha =\beta$ and, $\psi(\alpha )$ and $\psi(\beta )$ are connected by the path $\gamma ^0_{\alpha,\overline{\alpha}}$ whenever $\overline{\alpha} =\beta$.

Let $n \in \mathbb{N}$ and assume that the claim holds for $n$. Let $\alpha ,\beta \in \cantor$ be such that $\alpha F_{n+1}\beta$ or $\alpha F_{n+1}\overline{\beta}$. Suppose that we are in the case that $\alpha F_{n+1}\beta$. If $\alpha _n=\beta_n$, then we have $\alpha F_n\beta$ and so we are done by the inductive hypothesis. So, assume $\alpha _n\neq \beta _n$. Without loss of generality, assume that $\alpha _n=1$. We then have 

\begin{align*}
   \psi(\alpha )&=\psi (\alpha _0,...,\alpha _{n-1},1,\alpha _{n+1},\alpha _{n+2},...)\\ &\approx _H \psi (0   ,...,0,1,\alpha _{n+1},\alpha _{n+2},...) \\
   &\approx _H \psi (0   ,...,0,1,\overline{\alpha }_{n+1},\overline{\alpha }_{n+2},...) \\
&\approx _H \psi (1  ,..., 1 ,0,\alpha _{n+1},\alpha _{n+2},...) \\
   &\approx _H \psi (\beta _0,...,\beta _{n-1},0,\alpha _{n+1},\alpha _{n+2} , ...)=\psi(\beta )\\
\end{align*} 
where the first and the last equivalences follow from the inductive hypothesis whereas the second and the third ones are witnessed by some paths in $H_{n+1}$ and $H_0$ respectively. The case for $\alpha F_{n+1}\overline{\beta}$, though slightly more involved, can be shown in a similar way. Thus, we have $\alpha \e \beta$ implies $\psi (\alpha )\approx _H\psi (\beta)$.

For the other direction, assume that $\alpha,\beta\in \cantor $ such that $\alpha  \mathord{\not\mathrel{E}}_0^*\beta$. Assume for the sake of contradiction that $\psi (\alpha)\approx _H \psi (\beta)$. Say $\gamma:\mathbb{I}\rightarrow H$ is a path from $\psi (\alpha)$ to $\psi (\beta)$. We will first prove that $\gamma (\mathbb{I})$ must intersect infinitely many $H_k$'s. One can check that, by construction, for any $i\in \N$, the set $H_0\cup...\cup H_i$ is homeomorphic to $\C \times \mathbb{I}$. It is therefore easy to see that $\psi (\alpha)$ and $\psi (\beta)$ are not path-connected inside $H_0\cup ...\cup H_i$ for any $i\in \N$. Thus $\gamma (\mathbb{I})$ must intersect infinitely many $H_k$'s. Now choose $e_{n}\in H_{k_n}\cap \gamma (\mathbb{I})$ for each $n\in \N$ from distinct $H_{k_n}$'s. Take $t_n\in \mathbb{I}$ such that $\gamma (t_n)=e_{n}\in H_{k_n}$ for all $n\in \N$. Without loss of generality, we may assume that $(t_n)_n$ is an increasing (or decreasing) sequence. For if it is not the case, we can pass to an increasing (or decreasing) subsequence. For any $n\in \N$, $\gamma ([t_n,t_{n+1}])$ must intersect the line $x=\frac{1}{2}$, call $M$. This is because $\gamma (t_n)\in H_{k_n}$ and $\gamma (t_{n+1})\in H_{k_{n+1}}$ cannot be connected by a path inside $H\setminus M$. So for each $n\in \N$ let $\hat{t}_n\in (t_n,t_{n+1})$ such that $\gamma(\hat{t}_n)\in M$. Since $(t_n)_n$ and $(\hat{t}_n)_n$ converge to the same number and $\gamma$ is a continuous function, we have \[(0,0)=\displaystyle \lim _{n\rightarrow \infty} e_{n}=\lim _{n\rightarrow \infty}\gamma (t_n)=\lim _{n\rightarrow \infty}\gamma (\hat{t}_n)\in M,\]  which is clearly not true. Thus, $\psi (\alpha )\not \approx_H\psi (\beta)$. This finishes the proof that $\e \leq  _B\ \approx _H$.

To show $\approx _H\ \leq  _B\ \e$, consider the map $\varphi: H \rightarrow \cantor$ that sends each $e \in H$ to the sequence $\varphi(e)$ in $\cantor$ that maps under $\psi$ to the closest point on the copy of the Cantor set on the $x$-axis, to which $e$ is path-connected. Here we choose the left point in the case that there are two such points. It is straightforward to check that $\varphi$ is a Borel reduction from $\approx _H$ to $\e$.
\end{proof}
We constructed a compact subset of the real plane for which path-connectedness equivalence relation is Borel bireducible to $E_0$, which is the $\leq _B-$least nonsmooth Borel equivalence relation. We can ask then if this is the maximal possible complexity of $\approx _K$ for  $K\in \mathcal{K}( \mathbb{R}^2)$. That is to ask: does there exist a compact subset $K\subseteq \mathbb{R}^2$ for which $\approx _K$ is strictly more complicated than $E_0$?
\begin{question}
    Is there a compact subset $K$ of $\mathbb{R}^2$ such that $E_0 < _B\ \approx _K$? 
\end{question}

\vspace{1cm}
\bibliographystyle{amsalpha}
\bibliography{refs}

\end{document}